%
%
%

\documentclass{svproc}
%
%

\usepackage{url}
\usepackage{amsmath}
\usepackage{amssymb}
\usepackage{enumitem}
\newtheorem{observation}[theorem]{Observation}

\def\str#1{\mathbf {#1}}
\def\Alphabet{\Sigma}
\def\rel#1#2{R_{\mathbf{#1}}^{#2}}
\def\VSpace#1#2#3{\left[#1\right]^*\!{\binom{#2}{#3}}}
\def\Space#1#2#3{\left[#1\right]{\binom{#2}{#3}}}

\begin{document}
\mainmatter              
%
\title{Big Ramsey degrees and forbidden cycles}

%
\titlerunning{Big Ramsey degrees and forbidden cycles}
%
\author{Martin Balko\inst{1} \and David Chodounsk\'y\inst{1} \and Jan Hubi\v cka\inst{1}
Mat\v ej Kone\v cn\'y\inst{1} \and\\ Jaroslav Ne\v set\v ril\inst{2} \and Llu\'{i}s Vena\inst{3}}
\authorrunning{Martin Balko et al.} 
%
%
\institute{Department of Applied Mathematics (KAM), Charles University, Ma\-lo\-stranské~nám\v estí 25, Praha 1, Czech Republic,\\
\email{\{balko,chodounsky,hubicka,matej\}@kam.mff.cuni.cz},
\and
Computer Science Institute of Charles University (I\'UUK), Charles University, Ma\-lo\-stranské~nám\v estí 25, Praha 1, Czech Republic,\\
\email{nesetril@iuuk.mff.cuni.cz},
\and
Universitat Polit\`ecnica de Catalunya, Barcelona, Spain,\\
\email{lluis.vena@gmail.com}
}
\maketitle              

\begin{abstract}
Using the Carlson--Simpson theorem, we give a new general condition for a structure in a finite binary relational language to have finite big Ramsey degrees.
\keywords{structural Ramsey theory, big Ramsey degrees, dual Ramsey theorem}
\end{abstract}
\section{Introduction}
We consider standard model-theoretic (relational) structures in finite binary
languages formally introduced below. Such structures may be equivalently seen as
edge-labelled digraphs with finitely many labels, however the notion of
structures is more standard in the area. Structures may be finite or countably infinite.
Given structures $\str{A}$ and $\str{B}$, we denote by $\binom{\str{B}}{\str{A}}$ the set
of all embeddings from $\str{A}$ to $\str{B}$. We write $\str{C}\longrightarrow (\str{B})^\str{A}_{k,l}$ to denote the following statement:
for every colouring $\chi$ of $\binom{\str{C}}{\str{A}}$ with $k$ colours, there exists
an embedding $f\colon\str{B}\to\str{C}$ such that $\chi$ does not take more than $l$ values on $\binom{f(\str{B})}{\str{A}}$.
For a countably infinite structure $\str{B}$ and its finite substructure $\str{A}$, the \emph{big Ramsey degree} of $\str{A}$ in $\str{B}$ is
the least number $l'\in \mathbb N\cup \{\infty\}$
such that $\str{B}\longrightarrow (\str{B})^\str{A}_{k,l'}$
for every $k\in \mathbb N$.
We say that the \emph{big Ramsey degrees of~$\str{B}$ are finite} if for every finite substructure $\str{A}$ of~$\str{B}$ the big Ramsey degree of $\str{A}$ in $\str{B}$
is finite.

We focus on structures in binary languages $L$
and adopt
some graph-theoretic terminology.  Given a structure $\str{A}$ and distinct vertices $u$ and $v$, we
say that $u$ and $v$ are \emph{adjacent} if there exists $\rel{}{}\in L$ such
that either $(u,v)\in \rel{A}{}$ or $(v,u)\in \rel{A}{}$.  A structure $\str{A}$
is \emph{irreducible} if any two distinct vertices are adjacent. 
A sequence $v_0,v_1,\ldots,v_{\ell-1}$, $\ell\geq 3$, of distinct vertices of a structure $\str{A}$
is called a \emph{cycle} if $v_i$ is adjacent to $v_{i+1}$ for every
$i \in \{0,\dots,\ell-2\}$ as well as $v_0$ adjacent to $v_\ell$.
A cycle is \emph{induced} if none of the other remaining pairs of vertices in the
sequence is adjacent.

Following ~\cite[Section 2]{Hubicka2016}, we call 
a homomorphism $f\colon \str{A}\to\str{B}$ (see Section~\ref{sec:preliminaries}) a \emph{homomorphism-embe\-dding}
if $f$ restricted to any irreducible substructure of $\str{A}$  is an
embedding. 
The homomorphism-embedding $f$ is called a \emph{(strong) completion of $\str{A}$ to $\str{B}$} provided 
that
$\str{B}$ is irreducible and $f$ is injective.

Our main result, which applies techniques developed by the third author in~\cite{Hubicka2020CS}, gives the following condition for a given structure to
have finite big Ramsey degrees.

\begin{theorem}
\label{thm:cycles}
Let $L$ be a finite language consisting of unary and binary symbols, and let $\str{K}$ be a countably-infinite irreducible structure. Assume that every countable structure $\str{A}$ has a completion to $\str{K}$ provided that
every induced cycle in $\str{A}$ (seen as a substructure) has a completion to $\str{K}$ and every irreducible substructure of $\str{A}$ of size at most $2$ embeds into $\str K$.
Then $\str{A}$ has finite big Ramsey degrees.
\end{theorem}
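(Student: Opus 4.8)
The plan is to realise a copy of $\str{K}$ inside an infinite-dimensional combinatorial parameter space over a finite alphabet, to pull a given colouring back to that space, to apply the Carlson--Simpson (infinite dual Ramsey) theorem in order to thin out to a combinatorial subspace on which the colouring becomes \emph{canonical}, and finally to recover a copy of $\str{K}$ inside that subspace. The cycle-and-small-substructure completion hypothesis is precisely what makes such a representation exist \emph{and} makes it reconstructible inside an arbitrary subspace. Fix a finite substructure $\str{F}$ of $\str{K}$; it suffices to find an $l=l(\str{F})<\infty$ with $\str{K}\longrightarrow(\str{K})^{\str{F}}_{k,l}$ for every $k$. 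Since $L$ is finite and binary, only finitely many quantifier-free types of single vertices and of pairs of distinct vertices occur; collect a symbol for each into a finite alphabet $\Sigma$.

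First I would build, guided by an enumeration $v_0,v_1,\dots$ of $V(\str{K})$ and level by level, an auxiliary tree of words over $\Sigma$ equipped with a distinguished \emph{coding node} $t_n$ (of length roughly $n$) for each $v_n$, arranged so that for $m<n$ the quantifier-free type of the pair $(v_m,v_n)$, together with the unary types, can be read off from the letter that $t_n$ carries at the level of $t_m$. Then the quantifier-free type of any finite tuple of vertices of $\str{K}$ is a function of the purely combinatorial \emph{shape} of the corresponding coding nodes: which of them are initial segments of which, their pairwise meets, the letters at the branching positions, and the letters the longer words carry at the levels of the shorter ones. The completion hypothesis is what keeps this construction going: each finite partial object produced along the way has all of its induced cycles and all of its at-most-two-element irreducible substructures already realised in $\str{K}$ --- only types that occur in $\str{K}$ are ever placed in the tree, and the shapes admitted for coding nodes are chosen exactly so that no forbidden cycle appears --- so by hypothesis it has a completion to $\str{K}$, and a back-and-forth/amalgamation step then extends the partial embedding and lets the construction continue. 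A further use of the hypothesis makes the representation \emph{spread out}, in the sense that every node lies below infinitely many coding nodes; this is what will let combinatorial subspaces still meet the copy of $\str{K}$.

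Next, given $k$ and a colouring $\chi$ of $\binom{\str{K}}{\str{F}}$ with $k$ colours, I would colour each finite parameter word that encodes a potential shape of an $\str{F}$-copy by the $\chi$-colour of the copy it induces in the representation, using a fixed default colour when it induces none. Applying the Carlson--Simpson theorem --- possibly iterated, or in a product form, so as to deal one at a time with the finitely many unary types and branching coordinates --- yields an infinite-dimensional combinatorial subspace $\mathcal{S}$ on which the colour of an $\str{F}$-copy depends only on its shape. Carrying out the same construction as before, but now inside $\mathcal{S}$ --- the partial objects built there still have all their induced cycles and small irreducible substructures realised in $\str{K}$, so the hypothesis applies word for word --- produces a copy $\str{K}'\subseteq\mathcal{S}$ of $\str{K}$. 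On $\binom{\str{K}'}{\str{F}}$ the colouring $\chi$ then takes at most as many values as there are combinatorial shapes of an embedding into the tree of a structure of size $|\str{F}|$; call this number $l$, which depends only on $\str{F}$, $L$ and $\str{K}$. Transporting along an isomorphism $\str{K}'\cong\str{K}$ gives $\str{K}\longrightarrow(\str{K})^{\str{F}}_{k,l}$, and since $k$ and $\str{F}$ were arbitrary, $\str{K}$ has finite big Ramsey degrees.

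The hard part will be the representation together with its survival under passing to a subspace: the coding tree has to be designed so that (a) it encodes $\str{K}$ faithfully using only finitely many combinatorial parameters, (b) a copy of $\str{K}$ can be read back out of every combinatorial subspace, and (c) these two requirements hold simultaneously. Here the hypothesis plays the role that the strong-amalgamation/completion property plays for finite structural Ramsey theory in the Hubi\v cka--Ne\v set\v ril framework: because it interrogates only induced cycles and irreducible substructures of size at most two --- objects that are inherited when one passes to a subtree or to a combinatorial subspace --- the completion argument used to build $\str{K}$ transfers verbatim into $\mathcal{S}$, which is exactly what makes the whole scheme close up. Getting the bookkeeping of shapes, of passing letters, and of the iterated Carlson--Simpson applications to mesh cleanly with this completion argument is where the bulk of the work will lie.
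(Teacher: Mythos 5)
Your overall skeleton --- encode vertices of $\str{K}$ as words over a finite alphabet recording the binary types to earlier vertices, canonicalize a colouring via the Carlson--Simpson theorem so that it depends only on the combinatorial ``shape'' (embedding type) of a finite set of words, and bound the big Ramsey degree by the number of shapes --- is the same as the paper's. But the step you yourself flag as the hard part is exactly where the proposal has a genuine gap, and your proposed fix does not work as stated. First, the colouring you put on shapes is tied to $\chi$ only through the coding nodes of your original representation (everything else gets a default colour). The Carlson--Simpson subspace $\mathcal{S}$ is thin, so the copy $\str{K}'$ you intend to rebuild inside it must use words that were \emph{not} coding nodes of the original representation; for the shapes of $\str{F}$-copies inside $\str{K}'$ the canonical colour may simply be the default one and then says nothing about $\chi$ on those copies. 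Second, the completion hypothesis is a statement about completing abstract countable structures into $\str{K}$; it does not give the extension/realization property you actually need, namely that inside an arbitrary combinatorial subspace one can always find words carrying prescribed letters at the levels of the previously chosen coding nodes, and that the structure read off from whatever words \emph{are} available has all its induced cycles completable to $\str{K}$. Your claim that ``the hypothesis applies word for word'' inside $\mathcal{S}$ conflates a property of the structure you have already built with a property of the subspace and of your admissibility condition on words; neither is supplied.

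The paper closes this gap by a different device, which is its main point. It defines a structure $\str{G}$ on \emph{all} words over the alphabet of two-element substructures of $\str{K}$, where a shorter word $U$ and a longer word $V$ are related by the letter $\str{V}_{|U|}$, but only when every triangle $\triangle(\str{U}_i,\str{V}_{|U|},\str{V}_i)$ embeds into $\str{K}$ (condition \ref{A1}). Two things then do the work your reconstruction step was supposed to do: (i) substitution by an $\omega$-parameter word preserves the induced structure on pairs (Observation~\ref{obs:preserve2}), so a Carlson--Simpson subspace immediately yields a self-embedding $f$ of $\str{G}$ --- no coding nodes have to be recovered inside the subspace at all; and (ii) every induced cycle of $\str{G}$ completes to $\str{K}$ (Lemma~\ref{lem:completion}, proved by triangulating the cycle through its shortest vertex and invoking Lemma~\ref{lem:strong} on homomorphism-embeddings), so the hypothesis of the theorem, applied once to the single countable structure $\str{G}$, yields a completion $c\colon\str{G}\to\str{K}$. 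It is $c$ that links $\chi$ to a colouring of \emph{all} $k$-sets of words (eliminating any default colour), and the final copy of $\str{K}$ in $\str{K}$ is simply $c\circ f\circ\varphi$, where $\varphi\colon\str{K}\to\str{G}$ is the explicit coding embedding. So the induced-cycle hypothesis enters through completing one global word-structure, not through a vertex-by-vertex back-and-forth inside the subspace; without an analogue of $\str{G}$, condition \ref{A1}, and Lemma~\ref{lem:completion}, your scheme does not close up.
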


This can be seen as a first step towards a structural condition implying bounds
on big Ramsey degrees, giving a strengthening of results by Hubi\v cka and Ne\v set\-\v ril~\cite{Hubicka2016} to countable structures.

The study of big Ramsey degrees originates in the work of Laver who in 1969
showed that the big Ramsey degrees of the ordered set of rational numbers are finite~\cite[Chapter~6]{todorcevic2010introduction}.
The whole area has been revitalized recently; see~\cite{dobrinen2017universal,Hubicka2020CS} for references.
Our result can be used to identify many new examples of structures with finite
big Ramsey degrees. Theorem 1 is particularly fitting to examples  involving metric spaces. 
In particular, the following corollary may be of special interest.

\begin{corollary}\label{cor}
The following structures have finite big Ramsey degrees:
\begin{enumerate}[label=(\roman*)]
 \item\label{C1} Free amalgamation structures described by forbidden triangles,
 \item\label{C2} $S$-Urysohn space for finite distance sets $S$ for which $S$-Urysohn space exists,
 \item\label{C3} $\Lambda$-ultrametric spaces for a finite distributive lattice $\Lambda$~\cite{Sam},
 \item\label{C4} metric spaces associated to metrically homogeneous graphs of a finite diameter from Cherlin's list with no Henson constraints~\cite{Cherlin2013}.
\end{enumerate}
\end{corollary}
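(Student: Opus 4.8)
My plan is to deduce each case of the corollary from Theorem~\ref{thm:cycles} by checking its hypothesis. In each case the structure in question---call it $\str{K}$---is a countably-infinite \emph{irreducible} structure in a finite binary language: for \ref{C2}--\ref{C4} the language has one binary relation per possible distance (finitely many, as $S$, the lattice $\Lambda$, and the diameter are finite), so every pair of distinct vertices is adjacent; for \ref{C1} I would take the presentation in which every pair of distinct vertices is adjacent, adding if necessary an explicit relation holding exactly on the ``generic/apart'' pairs, which does not affect the big Ramsey degrees. So Theorem~\ref{thm:cycles} applies---its conclusion then being that $\str{K}$ has finite big Ramsey degrees---once we verify its hypothesis: that a countable $\str{A}$ has a completion to $\str{K}$ whenever every induced cycle of $\str{A}$ has one and every irreducible substructure of $\str{A}$ on at most $2$ vertices embeds into $\str{K}$. (The reverse implications are automatic, since an induced cycle or a small irreducible substructure is a substructure of $\str{A}$ and inherits a completion from one of $\str{A}$.) So the task reduces, case by case, to the known fact that completions to $\str{K}$ are obstructed only by ``cyclic'' configurations, which I recall next.

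\textbf{Cases \ref{C1}--\ref{C3}.} For \ref{C1}, being a \emph{free} amalgamation class forces every minimal forbidden structure $\str{F}$ to be complete, i.e.\ to carry no generic pair: for any pair $\{u,v\}$ of $\str{F}$, the free amalgam of $\str{F}\setminus\{v\}$ with $\str{F}\setminus\{u\}$ over $\str{F}\setminus\{u,v\}$ is $\str{F}$ with $\{u,v\}$ relabelled generic, and admissibility of this amalgam forces $\{u,v\}$ to have been non-generic already. Hence ``described by forbidden triangles'' means the minimal forbidden structures are irreducible and have at most $3$ vertices, and I would argue: filling every unspecified pair of $\str{A}$ with the generic label creates no forbidden structure, so $\str{A}$ completes to $\str{K}$ if and only if it contains no forbidden structure---equivalently, every irreducible substructure of $\str{A}$ of size $\le 2$ embeds into $\str{K}$ and every $3$-element irreducible substructure of $\str{A}$ (i.e.\ every induced $3$-cycle) embeds into $\str{K}$---whereas induced cycles of length $\ge 4$ always complete, again by generic filling. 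This is exactly the hypothesis of Theorem~\ref{thm:cycles}. For \ref{C2} and \ref{C3} I would use the shortest-path completion of a partial metric space (and its ultrametric analogue for $\Lambda$-valued distances): it succeeds exactly when every cycle satisfies the relevant cyclic inequality, and a violation on a non-induced cycle reduces, along a chord, to a violation on a strictly shorter cycle and hence eventually on an induced one. Thus $\str{A}$ completes to $\str{K}$ iff every induced cycle of $\str{A}$ does (the size-$\le 2$ condition being automatic in these languages), which is again the hypothesis---for \ref{C2} via the standard criterion for a partial $S$-metric space to admit an $S$-metric completion, valid whenever the $S$-Urysohn space exists, and for \ref{C3} via the analogous fact for $\Lambda$-ultrametric spaces from~\cite{Sam}.

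\textbf{Case \ref{C4} and the main obstacle.} For \ref{C4} I would invoke Cherlin's classification~\cite{Cherlin2013}: the admissible finite metric spaces of a metrically homogeneous graph of finite diameter are exactly those avoiding a finite list of obstructions---the triangle inequalities together with the bounded-odd-perimeter and bounded-perimeter conditions (the $K$- and $C$-parameters of the catalogue)---possibly augmented by Henson constraints; once the Henson constraints are discarded, every remaining obstruction is ``cyclic'' and is witnessed on an induced cycle, so a shortest-path-type completion again yields the hypothesis of Theorem~\ref{thm:cycles}. The main obstacle I expect is exactly this last verification: it requires a careful pass through Cherlin's list and the associated amalgamation/completion analysis to confirm that, after deleting the Henson families, no obstruction of size $\ge 4$ survives that fails to be detected on an induced cycle---whereas \ref{C1}--\ref{C3} are comparatively routine applications of known metric-completion theorems. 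Once the hypothesis is checked in each case, Theorem~\ref{thm:cycles} delivers the claimed finite big Ramsey degrees.
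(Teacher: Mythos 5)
Your proposal is correct and follows essentially the same route as the paper, which justifies the corollary in a single paragraph: represent each space as an irreducible structure in a finite binary language (one relation per distance, with the ``generic'' pair made explicit in case \ref{C1}), observe that the only obstructions to completion into $\str{K}$ are irreducible substructures on at most $2$ vertices and induced non-metric (forbidden) cycles, citing~\cite{Aranda2017} for the metric and Cherlin cases, and then apply Theorem~\ref{thm:cycles}. Your case-by-case verification (generic filling for \ref{C1}, shortest-path completion for \ref{C2}--\ref{C4}) simply spells out the details the paper delegates to the cited references.
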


Vertex partition properties of Urysohn spaces were extensively studied in connection to oscillation stability~\cite{The2010} and determining their big Ramsey degrees presented a long standing open problem: 
Corollary~\ref{cor}
\ref{C1} is a special case of the main result of~\cite{zucker2020}, 
 \ref{C2} is a strengthening of~\cite[Corollary 6.5 (3)]{Hubicka2020CS},
 \ref{C3} strengthens~\cite{NVT2009} and~\ref{C4} is a  strengthening of~\cite{Aranda2017} to infinite structures.

To see these connections, observe that a metric space can be also represented as an irreducible structure in a binary language
having one relation for each possible distance. 
Possible obstacles to completing a structure in this language to a metric space are irreducible substructures with at most 2 vertices
and induced \emph{non-metric cycles}.  These are cycles with the longest edge of a length
exceeding the sum of the lengths of all the remaining edges; see~\cite{Aranda2017}. 

Note that all these proofs may be modified to yield Ramsey classes of finite structures. Thus, for example, \ref{C2} generalizes \cite[Section~4.3.2]{Hubicka2016}.  

Our methods yield the following common strengthening of the main results from~\cite{zucker2020} and Theorem~\ref{thm:cycles}. To obtain this result, which is going to appear in~\cite{Balko2021}, we found a new strengthening of the dual Ramsey theorem.

\begin{theorem}
\label{thm:cycles3}
Let $L$ be a finite language consisting of unary and binary symbols, and let $\str{K}$ be a countably-infinite irreducible structure. Assume that
there exists $c>0$ such that every countable structure $\str{A}$ has a completion to $\str{K}$ provided that
every induced cycle in $\str{A}$ has a completion to $\str{K}$ and every irreducible substructure of $\str{A}$ of size at most $c$ embeds into $\str K$.
Then $\str{A}$ has finite big Ramsey degrees.
\end{theorem}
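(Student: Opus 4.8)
The plan is to follow the proof of Theorem~\ref{thm:cycles} step by step, substituting for the Carlson--Simpson theorem the stronger parametrised dual Ramsey theorem announced in~\cite{Balko2021}. Fix the constant $c$ supplied by the hypothesis. As in the proof of Theorem~\ref{thm:cycles}, I would first produce an \emph{enveloping structure}: an irreducible $L$-structure $\str E$ whose underlying set is (a subset of) the Carlson--Simpson space $\left[\Sigma\right]^*$ over a finite alphabet $\Sigma$ large enough to encode $L$, equipped with relations chosen so that (a) a strong completion of a suitable substructure of $\str E$ is isomorphic to $\str K$, and (b) every finite substructure of $\str E$ has all its irreducible substructures of size at most $c$ embedding into $\str K$ and all its induced cycles completing to $\str K$, so that the hypothesis applies and the relevant partial structures do complete. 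Every embedding $f\colon\str A\to\str K$ then determines, up to finitely many choices, a finite combinatorial datum, its \emph{shape}, recording the tree-order arrangement of the image of $\str A$ inside $\left[\Sigma\right]^*$, the isomorphism type of the relations it induces, and the certificates witnessing completion of its induced cycles and embeddability of its small irreducible substructures.

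Next one checks that, for a fixed finite substructure $\str A\le\str K$, the set of shapes of embeddings $\str A\to\str K$ is finite; this is a bookkeeping argument exploiting the finiteness of $L$, the bound $c$ on the irreducible configurations one is forced to track, and the rigidity of the tree order on $\left[\Sigma\right]^*$. Given a colouring $\chi$ of $\binom{\str K}{\str A}$ by finitely many colours, transport $\chi$ to a colouring of the corresponding finite subconfigurations of $\left[\Sigma\right]^*$ and apply the strengthened dual Ramsey theorem to obtain a sub-copy of the Carlson--Simpson space — equivalently, a self-embedding of $\str E$ onto a substructure still containing a copy of $\str K$ — on which $\chi$ depends only on the shape. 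Precomposing the completion from~(a) with this self-embedding yields an embedding $\str K\to\str K$ whose range meets at most (the number of shapes of $\str A$) colours of $\chi$, which bounds the big Ramsey degree of $\str A$ in $\str K$.

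The single genuinely new ingredient, and the expected main obstacle, is the strengthened dual Ramsey (Carlson--Simpson) theorem itself. The classical statement controls colourings that factor through one variable block at a time, which in the proof of Theorem~\ref{thm:cycles} is exactly enough because there only irreducible substructures of size $2$ must be tracked; once $c>2$, obstructions to completing $\str A$ to $\str K$ may live on up to $c$ vertices, and one must instead control colourings of $c$-tuples of blocks simultaneously, compatibly with the tree structure. I would expect to establish this by an iterated, product-style application of the classical Carlson--Simpson theorem — an induction on $c$ in which at each step the parameter words are arranged so that the configurations already stabilised are not disturbed — or, alternatively, to derive it from an infinitary form of the Graham--Rothschild parameter-set theorem. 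Once this combinatorial core is in place, the reduction via the completion hypothesis and the counting of shapes are routine variations on the argument for Theorem~\ref{thm:cycles}, with $2$ replaced throughout by $c$.
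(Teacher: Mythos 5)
Your outline does match, at the level of strategy, what the paper itself does with Theorem~\ref{thm:cycles3}: the paper does not prove it here at all, but announces that it follows by the method of Theorem~\ref{thm:cycles} once the Carlson--Simpson theorem is replaced by a new, stronger infinitary dual Ramsey theorem (an infinitary Graham--Rothschild theorem for ``words of higher order'') to appear in~\cite{Balko2021}. So identifying the strengthened dual Ramsey theorem as the crux is correct. The genuine gap is in how you treat that crux and in your claim that the rest is ``routine with $2$ replaced by $c$''. First, the enveloping structure cannot remain the plain word construction over a finite alphabet: in the structure $\str{G}$ of the paper, the relation between two word-vertices $U,V$ with $|U|<|V|$ is coded by the single letter $\str{V}_{|U|}$, and condition~\ref{A1} only forces the \emph{triangles} arising this way to embed into $\str{K}$ (that is exactly why irreducible substructures of size at most $3$ of $\str{G}$ embed into $\str{K}$ and the hypothesis of Theorem~\ref{thm:cycles} applies). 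For $c\geq 4$ there is no mechanism, with letters attached only to meet-levels, to guarantee that irreducible substructures of $\str{G}$ on $4,\dots,c$ vertices embed into $\str{K}$; hence the completion hypothesis of Theorem~\ref{thm:cycles3} cannot be invoked to get $\str{G}\to\str{K}$, and your step~(b) breaks. One is forced to let a vertex record its relationship to \emph{all} shorter vertices (not just one letter per level), which is precisely what ``words of higher order'' means, and this changes the ambient combinatorial space.

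Second, once the space is changed, Theorem~\ref{thm:multCS} no longer applies, and the needed Ramsey statement is not obtainable by the ``iterated, product-style application of the classical Carlson--Simpson theorem'' you propose as the expected route: controlling colourings compatibly with the higher-order structure is a genuinely new theorem (the main content of~\cite{Balko2021}), in the same way that Graham--Rothschild-type statements do not follow from naive iteration of their one-dimensional ancestors; the authors explicitly say they had to \emph{find} a new strengthening of the dual Ramsey theorem. So your proposal correctly locates the bottleneck but does not bridge it, and it additionally underestimates the changes required in the construction of the enveloping structure and in the bookkeeping of embedding types, which must be redone for the higher-order space rather than inherited from Proposition~\ref{prop:subspace}.
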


\section{Preliminaries}
\label{sec:preliminaries}
A relational language $L$ is a collection of (relational) symbols $\rel{}{}\in L$, each having its {\em
arity}.  An \emph{$L$-structure} $\str{A}$ on $A$ is a structure with the {\em
vertex set} $A$ and with relations $\rel{A}{}\subseteq A^r$ for every symbol
$\rel{}{}\in L$ of arity $r$.  If the set $A$ is finite, then we call $\str A$ a
\emph{finite structure}. 
A \emph{homomorphism} $f\colon\str{A}\to \str{B}$ is a mapping $f\colon A\to B$ such that
for every $\rel{}{}\in L$ of arity $r$ we have
$(x_1,x_2,\ldots, x_{r})\in \rel{A}{}\implies (f(x_1),f(x_2),\ldots,f(x_r))\in \rel{B}{}$. A homomorphism
$f$ is an \emph{embedding} if $f$ is injective and the implication above is an equivalence.
If the identity is an embedding $\str{A}\to\str{B}$, then we call $\str{A}$ a \emph{substructure} of $\str{B}$. 
In particular, our substructures are always induced.

\medskip 

Hubi\v cka \cite{Hubicka2020CS} connected big Ramsey degrees to an infinitary dual Ramsey theorem for parameters spaces. We now review the main notions used.
Given a finite alphabet $\Alphabet$ and $k\in \omega\cup \{\omega\}$, a \emph{$k$-parameter word} is a (possibly infinite) string $W$ in the
alphabet $\Alphabet\cup \{\lambda_i\colon 0\leq i<k\}$ containing all symbols ${\lambda_i : 0\leq i < k}$ such that, for every $1\leq j < k$, the first
occurrence of $\lambda_j$ appears after the first occurrence of $\lambda_{j-1}$.
The symbols $\lambda_i$ are called \emph{parameters}.
Given a parameter word $W$, we denote its \emph{length} by $|W|$. The letter (or parameter) on index $j$ with $0\leq j < |W|$ is denoted by $W_j$. Note that the first letter of $W$ has index $0$.
A $0$-parameter word is simply a \emph{word}. 
Let $W$ be an $n$-parameter word and let $U$ be a parameter word of length $k\leq n$, where $k,n\in \omega\cup\{\omega\}$. Then 
$W(U)$ is the parameter word created by \emph{substituting} $U$ to $W$. More precisely, $W(U)$ is created from~$W$ by replacing each occurrence of $\lambda_i$, $0\leq i < k$, by $U_i$ and truncating it just
before the first occurrence of $\lambda_k$ in $W$.
Given an $n$-parameter word $W$ and a set $S$ of parameter words of length at most $n$, we define $W(S):=\{W(U)\colon U\in S\}$.

We let $\Space{\Alphabet}{n}{k}$ be the set of all $k$-parameter words of
length $n$, where $k\leq n\in \omega\cup \{\omega\}$. If $k$ is finite, then we also define $\VSpace{\Alphabet}{\omega}{k}:=\bigcup_{k\leq i < \omega}\: \Space{\Alphabet}{i}{k}$.
For brevity, we put  $\Alphabet^*:=\VSpace{\Alphabet}{\omega}{0}$.

Our main tool is the following infinitary dual Ramsey theorem, which is a special case of the Carlson--Simpson theorem \cite{carlson1984,todorcevic2010introduction}.  

\begin{theorem}
\label{thm:multCS}
Let $k\geq 0$ be a finite integer.
If $\VSpace{\emptyset}{\omega}{k}$ is coloured by finitely many colours, then there exists
$W\in \Space{\emptyset}{\omega}{\omega}$ such that $W\left(\VSpace{\emptyset}{\omega}{k}\right)$ is monochromatic.
\end{theorem}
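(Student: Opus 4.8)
The plan is to recognize Theorem~\ref{thm:multCS} as the special case $\Alphabet=\emptyset$ of the general parameter-word form of the Carlson--Simpson theorem and to indicate how that form is established. It helps first to read the statement combinatorially: over the empty alphabet every position of a word must carry a parameter, so a word in $\Space{\emptyset}{n}{k}$ is exactly an ordered partition of $\{0,\dots,n-1\}$ into $k$ nonempty blocks, ordered by their least elements (as forced by the first-occurrence convention), and $\Space{\emptyset}{\omega}{\omega}$ is the set of such partitions of $\omega$ into infinitely many blocks. The substitution $W(U)$ is then the usual composition of parameter words: on the initial segment of $W$ preceding the first occurrence of the $|U|$-th parameter it replaces each $\lambda_i$ by the $i$-th symbol of $U$. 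Thus $W\bigl(\VSpace{\emptyset}{\omega}{k}\bigr)$ is the family of finite $k$-parameter words obtained by coarsening finite prefixes of $W$, and the theorem asserts that a single $W\in\Space{\emptyset}{\omega}{\omega}$ can be chosen making this whole family monochromatic for any prescribed finite colouring.

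With $k$ fixed, I would prove the statement by combining the finite dual Ramsey (Graham--Rothschild) theorem with a fusion argument. The finite theorem states that for all $r$ and all $k\le m$ there is $n\ge m$ such that every $r$-colouring of $\Space{\emptyset}{n}{k}$ admits $V\in\Space{\emptyset}{n}{m}$ with $V\bigl(\Space{\emptyset}{m}{k}\bigr)$ monochromatic; I would take this as known. To obtain the infinite $W$, I would build an increasing chain of finite approximations $W_0\sqsubseteq W_1\sqsubseteq\cdots$, where at stage $j$ the finite theorem, applied with $m$ and $n$ chosen large relative to $j$ and to the number of colours, extends the current approximation so that all $k$-subwords living in the newly added positions acquire a common colour. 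A pigeonhole over the finitely many colours, applied along the stages, stabilises this common colour to a single value $c$, while the diagonal nature of the construction ensures that the limit $W\in\Space{\emptyset}{\omega}{\omega}$ is a genuine $\omega$-parameter word with all blocks finite. Every $U\in\VSpace{\emptyset}{\omega}{k}$ then has $W(U)$ realised already inside some approximation $W_j$, and hence coloured $c$, so $W\bigl(\VSpace{\emptyset}{\omega}{k}\bigr)$ is monochromatic.

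The crux is precisely the passage from the finite to the infinite theorem. Plain compactness yields only arbitrarily large finite monochromatic subspaces, possibly in different colours, so the approximations must be organised so that their monochromatic parts amalgamate coherently under composition and a single colour persists in the limit; this coherent bookkeeping is the delicate point. The same extraction is packaged cleanly by Todorcevic's theory of topological Ramsey spaces: the space of infinite ordered partitions, under restriction and composition, satisfies the abstract axioms, with the pigeonhole axiom being exactly the finite dual Ramsey theorem, and the abstract Ellentuck-type theorem then delivers Theorem~\ref{thm:multCS}. A third route runs the extraction through idempotent ultrafilters on the semigroup of parameter words, as in the ultrafilter proof of the infinite Hales--Jewett theorem. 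In each route the one non-formal ingredient is the finite Graham--Rothschild theorem, which I would invoke as a black box.
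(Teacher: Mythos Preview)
The paper does not prove Theorem~\ref{thm:multCS} at all: it is stated as a known special case of the Carlson--Simpson theorem, with citations to \cite{carlson1984} and \cite{todorcevic2010introduction}, and is then used as a black box in the proof of Theorem~\ref{thm:cycles2}. So there is no ``paper's own proof'' to compare against; the paper simply imports the result.

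Your proposal goes well beyond what the paper does, and as an outline of how the cited result is actually established it is accurate. Your combinatorial reading of $\Space{\emptyset}{n}{k}$ as ordered partitions and of substitution as composition is correct, and the three routes you name (fusion from Graham--Rothschild, topological Ramsey spaces, idempotent ultrafilters) are indeed the standard ones. You are also right that the fusion route is the one with a genuinely delicate step: the na\"ive scheme of extending approximations via the finite theorem does not by itself control $k$-subwords that straddle old and new blocks, and your acknowledgment that ``this coherent bookkeeping is the delicate point'' is exactly where the real work of Carlson--Simpson lies. If you intend this as a self-contained proof rather than a pointer to the literature, that step would need to be filled in (or replaced by one of the other two routes, each of which still relies on a nontrivial black box). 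For the purposes of this paper, however, citing the result as you and the authors both do is entirely appropriate.
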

\begin{definition}[\cite{Hubicka2020CS}]
\label{def:envelope}
Given a finite alphabet $\Alphabet$, a finite set $S\subseteq \Sigma^*$ and $d>0$, we call $W\in \VSpace{\emptyset}{\omega}{d}$ a \emph{$d$-parametric envelope} of $S$ if there exists a set $S'\subseteq
\Sigma^*$ satisfying $W(S')=S$.
In such case the set $S'$ is called the \emph{embedding type} of $S$ in $W$ and is denoted by $\tau_W(S)$.
If $d$ is the minimal integer for which a $d$-parameter envelope $W$ of $S$ exists, then we call $W$ a \emph{minimal envelope}.
\end{definition}
\begin{proposition}[\cite{Hubicka2020CS}]
\label{prop:subspace}
Let $\Alphabet$ be a finite alphabet and let $k\geq 0$ be a finite integer. Then there exists a finite $T=T(|\Alphabet|,k)$ such that
every set $S\subseteq \Sigma^*$, $|S|=k$, has a $d$-parameter envelope with $d\leq T$.
Consequently, there are only finitely many embedding types of sets of size $k$ within their corresponding minimal envelopes.
Finally, for any two minimal envelopes $W$, $W'$ of $S$, we have $\tau_{W}(S)=\tau_{W'}(S)$.
\end{proposition}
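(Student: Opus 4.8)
The plan is to prove Proposition~\ref{prop:subspace} in three separate pieces, matching its three assertions: (i) a uniform bound $T=T(|\Sigma|,k)$ on the dimension of some envelope of any $k$-element set $S\subseteq\Sigma^*$; (ii) finiteness of the number of embedding types of $k$-element sets in their minimal envelopes, which will follow as a corollary; and (iii) uniqueness of the embedding type over all minimal envelopes of a fixed $S$. The central combinatorial object is, given $S=\{s_0,\dots,s_{k-1}\}$, the equivalence on coordinate positions: two positions $p,q$ (in the common prefix range) are ``equivalent'' if the pattern of letters $(s_0)_p,\dots,(s_{k-1})_p$ agrees with $(s_0)_q,\dots,(s_{k-1})_q$ and the words do not branch between them. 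I would first normalise by padding: although the $s_i$ may have different lengths, I can reason up to the maximal length and treat positions beyond a word's end uniformly, so each position $p$ carries a column vector in $\Sigma^{k}$ (with a dummy symbol for ``past the end'').

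First I would construct an explicit envelope $W$ to establish~(i). The idea is to scan the columns of $S$ from left to right and collapse each maximal block of consecutive, identical columns whose common column value is a \emph{constant} vector (all $k$ entries equal to the same letter of $\Sigma$) into a single constant letter of $W$; every column that is \emph{not} constant, i.e.\ where the words of $S$ disagree, must be covered by a parameter $\lambda_i$ of $W$, and distinct ``disagreement patterns'' must be assigned distinct parameters while identical ones may reuse a parameter subject to the first-occurrence ordering condition on the $\lambda_i$. Concretely, I would define $W$ so that positions sharing the same non-constant column vector are assigned the same parameter, writing constant letters verbatim, and then exhibit the retraction $S'$ with $W(S')=S$. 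The key quantitative point is that the number of parameters used is at most the number of \emph{distinct} non-constant column vectors over $S$, and each such vector lies in $\Sigma^{k}$, so the dimension $d$ is bounded by $T(|\Sigma|,k):=|\Sigma|^{k}$ (or a similar explicit function); this bound is independent of the lengths of the words in $S$, which is exactly what is required. Verifying that this $W$ genuinely lies in $\VSpace{\emptyset}{\omega}{d}$ — in particular that the first-occurrence ordering of the parameters can be met by relabelling — and that the substituted words recover $S$ is a routine but careful check.

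Assertion~(ii) then follows by a counting argument: once the dimension is bounded by $T$, a minimal envelope has dimension at most $T$, and an embedding type $\tau_W(S)=S'$ is a set of $k$ parameter words over the \emph{empty} alphabet of length at most $T$ using at most $T$ parameters; there are only finitely many such sets, so there are only finitely many possible embedding types across all $S$ of size $k$. The main obstacle is assertion~(iii), the uniqueness statement $\tau_W(S)=\tau_{W'}(S)$ for two minimal envelopes $W,W'$ of the same $S$. For this I would argue that a minimal envelope is essentially canonical: the construction in~(i) in fact produces a minimal envelope when one is as greedy as possible (never introducing a parameter where a constant letter suffices, and never using two parameters where the column vectors coincide), and I would show that \emph{any} minimal envelope must agree with this canonical one up to the obvious reparametrisation (monotone renaming of the $\lambda_i$ forced by the first-occurrence rule) and up to collapsing runs of repeated constant columns, neither of which changes the embedding type $\tau_W(S)$. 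The crux is to prove that minimality forces each parameter of $W$ to correspond exactly to one equivalence class of non-constant columns: if two distinct parameters covered positions with the same column pattern one could merge them to get a smaller envelope, contradicting minimality, and conversely a single parameter cannot cover two genuinely different column patterns because substitution would then be unable to recover $S$. Establishing this rigidity — that minimality pins down both the placement and the identification of parameters, hence the embedding type — is where I expect the real work to lie, and it is best handled by an induction on position from left to right, tracking how the first-occurrence ordering of parameters in $W$ is dictated by the leftmost disagreement columns of $S$.
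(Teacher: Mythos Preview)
The paper does not prove Proposition~\ref{prop:subspace}; it is quoted from~\cite{Hubicka2020CS} and used as a black box, so there is no in-paper proof to compare your proposal against.

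That said, your plan contains a concrete error. You propose to ``collapse each maximal block of consecutive, identical columns whose common column value is a constant vector \dots\ into a single constant letter of $W$'' and later to ``writ[e] constant letters verbatim''. But by Definition~\ref{def:envelope} the envelope lies in $\VSpace{\emptyset}{\omega}{d}$, a $d$-parameter word over the \emph{empty} alphabet: $W$ consists entirely of the symbols $\lambda_0,\dots,\lambda_{d-1}$ and cannot carry any letter of $\Sigma$. In particular you cannot shorten $W$ below the maximal word-length in $S$, and constant columns must also be covered by parameters. The fix is easy and your outline survives it: assign one parameter to each distinct column vector (constant or not, with an extra symbol for ``past the end'' to handle unequal lengths), numbering parameters by order of first occurrence; this yields $d\le(|\Sigma|+1)^k$, still a function of $|\Sigma|$ and $k$ only, and~(ii) follows as you say. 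For~(iii), once the empty-alphabet point is absorbed, the argument actually becomes cleaner than you anticipate: any envelope induces a partition of the positions $\{0,\dots,|W|-1\}$ by parameter index, and two positions sharing a parameter must have identical column vectors (else substitution cannot recover $S$), so this partition refines the column-vector partition; minimality forces the two partitions to coincide, and the first-occurrence rule then pins down the parameter labels, so the minimal $W$ itself---not just $\tau_W(S)$---is unique. The truncation bookkeeping for words of different lengths is the only place requiring explicit care.
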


We will thus also use $\tau(S)$ to denote the type $\tau_W(S)$ for some minimal $W$.

\section{Proof of Theorem~\ref{thm:cycles}}

The proof is condensed due to the space limitations,
but we believe it gives an idea of 
fine interplay
of all building blocks.
Throughout this section we assume that $\str{K}$ and $L$ are fixed and
satisfy the assumptions of Theorem~\ref{thm:cycles}. 
Following ideas from \cite[Section 4.1]{Hubicka2020CS}, we construct a special $L$-structure $\str G$ with finite big Ramsey degrees and then use $\str G$ to prove 
finiteness of big Ramsey degrees for~$\str K$.

\begin{lemma}\label{lem:strong}
Let $h\colon \str{A}\to \str{B}$ be a homomorphism-embedding. If $\str{B}$ has a completion $c\colon \str{B} \to \str{K}$, then there exists a completion $d\colon \str{A}\to \str{K}$.
\end{lemma}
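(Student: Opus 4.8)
The plan is to compose $h$ with $c$ after first extending the relational structure on the image. The naive composition $c \circ h$ is a homomorphism $\str{A} \to \str{K}$, but it need not be injective (so it is not a completion in the required sense): $h$ is only a homomorphism-embedding, so it may collapse vertices that are not together in any irreducible substructure of $\str{A}$, and $c$ may do the same on $\str{B}$. The fix is standard: we first form an intermediate structure $\str{A}'$ on the vertex set $A$ whose relations are the preimages under $h$ of the relations of $\str{B}$, i.e. we pull back $\str{B}$ along $h$. Then $h\colon \str{A}' \to \str{B}$ is a homomorphism-embedding which is, in addition, "relation-reflecting" in the sense that a tuple is in a relation of $\str{A}'$ iff its $h$-image is in the corresponding relation of $\str{B}$; in particular every irreducible substructure of $\str A'$ still embeds into $\str B$ under $h$ and hence into $\str K$ under $c\circ h$.

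Next I would check that $\str{A}'$ satisfies the hypothesis of Theorem~\ref{thm:cycles} relative to $\str{K}$, or rather argue directly that $\str{A}'$ itself has a completion to $\str{K}$. Every irreducible substructure of $\str{A}'$ of size at most $2$ embeds into $\str K$: such a substructure is sent by $h$ to an irreducible substructure of $\str B$ of size at most $2$ (here we use that $h$ restricted to an irreducible substructure is an embedding, so the image has the same size and is irreducible), which embeds into $\str K$ via $c$, and by the relation-reflecting property the composite is an embedding of the size-$\le 2$ substructure. Similarly, every induced cycle $C$ in $\str{A}'$ is mapped by $h$ to a cycle in $\str{B}$ (adjacency is preserved since $h$ is a homomorphism, and the $h$-images are distinct because\ldots here one must be slightly careful, see below), which has a completion to $\str K$ by composing $c$ with the inclusion of that cycle; pulling this back through $h$ restricted to $C$ gives a completion of the induced cycle $C$ to $\str K$. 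Thus by the assumption of Theorem~\ref{thm:cycles}, $\str{A}'$ has a completion $e\colon \str{A}' \to \str{K}$, and since $\str{A}$ and $\str{A}'$ have the same vertex set with $\rel{A}{} \subseteq \rel{A'}{}$ for every $\rel{}{}\in L$ (because $h$ is a homomorphism from $\str A$ to $\str B$), the map $e$ is also a completion $d\colon \str{A}\to\str{K}$.

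The main obstacle I anticipate is the injectivity/distinctness bookkeeping: $h$ need not be injective, so vertices of $\str A'$ can be identified by $h$, and then "induced cycles of $\str A'$" and "irreducible substructures of $\str A'$" need to be matched with genuine substructures of $\str B$ of the same size. The key observation that makes this work is that an irreducible substructure of $\str A'$ — in particular any edge, and any induced cycle viewed edge by edge — is handled by the homomorphism-embedding property: $h$ is injective on each irreducible piece, and on an induced cycle $v_0,\dots,v_{\ell-1}$ one applies this to the irreducible pairs $\{v_i,v_{i+1}\}$ to see consecutive vertices have distinct images, which together with the pulled-back relations is exactly what is needed to transport the completion back. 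If one prefers to avoid re-proving the hypothesis, an even shorter route is: pull back $\str B$ to $\str A'$ as above, observe $h\colon \str A' \to \str B$ is now a homomorphism-embedding that reflects all relations, and note that a completion of $\str B$ composes with such a map to a completion of $\str A'$ directly (the image of an irreducible substructure of $\str A'$ is an irreducible substructure of $\str B$ of the same size, on which $c$ is an embedding, and relations are reflected), then pass from $\str A'$ to $\str A$ as before.
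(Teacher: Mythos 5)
Your reduction from $\str{A}$ to the pullback $\str{A}'$ and the passage back from a completion of $\str{A}'$ to one of $\str{A}$ are fine (for a pair lying in an irreducible substructure of $\str{A}$, the homomorphism-embedding property of $h$ guarantees that $\str{A}$ and $\str{A}'$ induce the same structure on it). The gap is exactly at the point you flagged: induced cycles of $\str{A}'$ need \emph{not} map to cycles of $\str{B}$. Your fix only shows that \emph{consecutive} vertices of an induced cycle have distinct $h$-images (they are adjacent in $\str{A}'$, so their images are adjacent in $\str{B}$); it says nothing about non-consecutive ones. Concretely, if $h$ identifies $x$ and $y$, then $x$ and $y$ have identical neighbourhoods in $\str{A}'$, and whenever $h(x)=h(y)=w$ is adjacent in $\str{B}$ to two vertices $h(z_1),h(z_2)$ that are non-adjacent (or equal), the quadruple $x,z_1,y,z_2$ is an induced $4$-cycle of $\str{A}'$ whose $h$-image is a path or a single edge, not a cycle. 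For such a cycle there is nothing in $\str{B}$ to complete and pull back, and $c\circ h$ restricted to it is not injective, so it is not a completion; showing that this $4$-cycle completes to $\str{K}$ amounts to finding two distinct ``copies'' of $c(w)$ in $\str{K}$, which is precisely the content of the lemma and does not follow from composing maps. The same objection kills your ``even shorter route'': a completion of $\str{B}$ composed with a non-injective $h$ is never a completion of $\str{A}'$, since completions are required to be injective (you noted this requirement at the start but the shortcut ignores it). A smaller issue: when checking irreducible substructures of size at most $2$ and edges of cycles, you appeal to the homomorphism-embedding property of $h$, but the relevant pairs are irreducible in $\str{A}'$, not in $\str{A}$; here the pullback definition of $\str{A}'$ saves you, not the property of $h$.

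What is missing is the paper's key idea for absorbing identifications: reduce to the case where $c$ is the identity and $h$ identifies a single pair of vertices, replace $\str{B}$ by the irreducible structure $\str{B}'$ induced on $B$ by $\str{K}$, and duplicate the identified vertex to obtain $\str{B}''$; because $\str{B}'$ is irreducible, duplication creates no new induced cycles, so the standing hypothesis on $\str{K}$ (every structure whose induced cycles and small irreducible substructures complete has a completion) yields a completion of $\str{B}''$, which gives the desired $d$. Some argument of this kind (vertex duplication inside an irreducible completion, or an equivalent device) is unavoidable, since your problematic $4$-cycles are exactly the structures it is designed to handle; as written, your proof does not establish that they complete to $\str{K}$.
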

\begin{proof}
It is clearly enough to consider the case where $c$ is the identity and $h$ is surjective and almost identity, that is, there is a unique vertex $v\in A$ such that $h(v)\neq v$.
Let $\str{B}'$ be the structure induced by $\str{K}$ on $B$.
We create a structure $\str{B}''$ from $\str{B}'$ by duplicating the vertex $h(v)$ to $v'$ and leaving $h(v)$ not adjacent to $v'$. Since $\str{B}'$ is irreducible, it is easy to observe that 
all induced cycles in $\str{B}''$ are already present in $\str{B}'$.
By the assumption on $\str{K}$, there is a completion $c'\colon \str{B}''\to \str{K}$.
Now, the completion $d\colon\str{A}\to \str{K}$ can be constructed by setting $d(v)=c'(v')$ and
$d(u)=c'(u)$ for every $u\in A\setminus \{v\}$.
\end{proof}

We put $\Alphabet=\{\str{A}:A=\{0,1\} \hbox{ and there exists an embedding $\str{A}\to\str{K}$}\}.$
For $U\in \Alphabet^*$, we will use bold characters to refer to the letters (e.g. $\str U_0$ is the structure corresponding to the first letter of $U$) to emphasize that $\Alphabet$ consists of structures.

Given $\str{A},\str{B},\str{C}\in \Alphabet$, there is at most one structure $\str{D}$ with the vertex set $\{u,v,w\}$ satisfying the following three conditions:
(i) mapping $0\mapsto u,1\mapsto v$ is an embedding $\str{A}\to\str{D}$,
(ii) the mapping $0\mapsto v,1\mapsto w$ is an embedding $\str{B}\to\str{D}$, and
(iii) the mapping $0\mapsto u,1\mapsto w$ is an embedding $\str{C}\to\str{D}$.
If such a structure $\str{D}$ exists, we denote it by $\triangle(\str{A},\str{B},\str{C})$ (since  $\str{A},\str{B},\str{C}$ form a triangle). Otherwise we leave $\triangle(\str{A},\str{B},\str{C})$  undefined.

\begin{definition}
Let $\str{G}$ be the following structure.
\begin{enumerate}
  \item\label{itemG1} The vertex set $G$ consists of all finite words $W$ of length at least 1 in the alphabet $\Alphabet$ that satisfy the following condition.
\begin{enumerate}[label=(A\arabic*)]
\item\label{itemGA1} For all $i$ and $j$ with $0\leq i<j<|W|$, the structure induced by $\str{W}_i$ on $\{1\}$ is isomorphic to the structure induced by $\str{W}_j$ on $\{1\}$.
\end{enumerate}
  \item\label{itemG2} Let $U,V$ be vertices of $\str{G}$ with $|U|<|V|$ that satisfy the following condition.
\begin{enumerate}[resume,label=(A\arabic*)]
\item\label{A1} The structure $\triangle(\str{U}_i,\str{V}_{|U|},\str{V}_i)$ is defined for every $i$ with $0\leq i<|U|$ and it has an embedding to $\str{K}$.
\end{enumerate}
Then the mapping $0\mapsto U, 1\mapsto V$ is an embedding of type $\str{V}_{|U|}\to \str{G}$.
  \item\label{itemG3} There are no tuples in the relations $\rel{G}{}$, $\rel{}{}\in L$, other than the ones given by~\ref{itemG2}.
\end{enumerate}
\end{definition}

\begin{lemma}
\label{lem:completion}
Every induced cycle in $\str{G}$ has a completion to $\str{K}$. Since every irreducible substructure of size at most 3 embeds into $\str{K}$ there is a completion $\str{G}\to\str{K}$.
\end{lemma}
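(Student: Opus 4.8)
The plan is to verify the hypotheses of the assumption on $\str{K}$ in Theorem~\ref{thm:cycles} for the structure $\str{G}$, namely that every induced cycle in $\str{G}$ (as a substructure) has a completion to $\str{K}$ and every irreducible substructure of size at most $2$ embeds into $\str{K}$. The second claim in the lemma statement then upgrades this to size-$3$ irreducible substructures embedding into $\str{K}$, which is exactly what makes the triangle operation $\triangle(\cdot,\cdot,\cdot)$ usable throughout and what the hypothesis of the theorem needs once we have induced cycles dealt with; the existence of the completion $\str{G}\to\str{K}$ is then immediate from the assumption on $\str{K}$. So the real content is: \emph{every induced cycle in $\str{G}$ has a completion to $\str{K}$}.

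First I would understand which pairs of vertices of $\str{G}$ are adjacent. By items~\ref{itemG2} and~\ref{itemG3} of the definition, two vertices $U,V$ with $|U|<|V|$ are adjacent precisely when condition~\ref{A1} holds, and in that case the pair $\{U,V\}$ carries the structure $\str{V}_{|U|}$ (read with $U$ as vertex $0$ and $V$ as vertex $1$). Note in particular that the letter recording the $\{1\}$-unary type is the same for all letters of a word by~\ref{itemGA1}, so when we compare three vertices $U,V,W$ of distinct lengths $|U|<|V|<|W|$, the three pairwise structures are $\str{V}_{|U|}$, $\str{W}_{|U|}$, $\str{W}_{|V|}$, and the candidate triangle on them is exactly $\triangle(\str{V}_{|U|}, \str{W}_{|V|}, \str{W}_{|U|})$ in the notation of the excerpt. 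The key structural observation I expect to use is that any two adjacent vertices of $\str{G}$ have different lengths (a word is never adjacent to itself, and by construction adjacency is only defined between words of different length), hence \emph{along any induced cycle $v_0,\dots,v_{\ell-1}$ in $\str{G}$ the lengths are all distinct}; let $v_m$ be the vertex of maximal length on the cycle.

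Next I would build the completion explicitly. Order the vertices of the induced cycle, and define an $L$-structure $\str{D}$ on the same vertex set that makes it irreducible: for each \emph{non-adjacent} pair $\{X,Y\}$ on the cycle, with $|X|<|Y|$ say, set the structure on $\{X,Y\}$ to be $\str{Y}_{|X|}$ — i.e.\ I put in the ``missing'' edge using exactly the letter at position $|X|$ of the longer word, just as item~\ref{itemG2} would prescribe if the pair were adjacent. This is a legitimate element of $\Alphabet$ since $\str{Y}_{|X|}$ embeds into $\str{K}$ by definition of $\Alphabet$, and the unary types are consistent by~\ref{itemGA1}; so every irreducible (size $\le 2$) piece of $\str{D}$ embeds into $\str{K}$. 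It remains to check that every induced cycle of $\str{D}$ has a completion to $\str{K}$: but $\str{D}$ is irreducible, so its only induced cycles are triangles, and each triangle of $\str{D}$ has the form $\triangle(\str{Y}_{|X|}, \str{Z}_{|Y|}, \str{Z}_{|X|})$ on some triple $X,Y,Z$ with $|X|<|Y|<|Z|$; by construction (either via~\ref{A1} when the relevant pair was an edge of the cycle, or via our definition of the missing edges, which reuses the same letters) this triangle is defined and embeds into $\str{K}$. Hence by the assumption on $\str{K}$ there is a completion $\str{D}\to\str{K}$, and composing with the identity completion $\text{(cycle)}\to\str{D}$ and invoking Lemma~\ref{lem:strong} (the identity cycle $\hookrightarrow \str{D}$ is a homomorphism-embedding) gives the desired completion of the induced cycle to $\str{K}$.

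The main obstacle I anticipate is the bookkeeping needed to show that the ``filled-in'' triangles of $\str{D}$ are genuinely among the triangles $\triangle(\str{A},\str{B},\str{C})$ that embed into $\str{K}$ — i.e.\ that the way we chose the labels of non-edges of the cycle, using the position-indexed letters of the longer word, is consistent across all triples and always lands in the admissible set. This requires carefully chasing the length ordering on each triple and using~\ref{itemGA1} to match unary types, and checking that when a pair of the triple happens to be an actual edge of the original cycle, the label we would assign agrees with the one forced by~\ref{A1}. Everything else — distinctness of lengths along an induced cycle, irreducibility of $\str{D}$, the reduction of size-$3$ irreducibles to the triangle operation, and the final application of the hypothesis on $\str{K}$ — is routine once this consistency is established.
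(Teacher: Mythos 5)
There are two genuine gaps, and the second one is fatal to your approach as stated. First, your ``key structural observation'' that the lengths of the vertices along an induced cycle are pairwise distinct is false: adjacency in $\str{G}$ forces distinct lengths only for \emph{consecutive} vertices of the cycle, while non-adjacent vertices may well have equal length. This is not a side issue: your construction of $\str{D}$ needs a label $\str{Y}_{|X|}$ for every non-adjacent pair, which is undefined when $|X|=|Y|$. The paper spends the second half of its proof exactly on this case (several vertices sharing the minimal length): those vertices are identified into one, the quotient $\str{E}$ is completed, and Lemma~\ref{lem:strong} is invoked to pull the completion back.

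Second, and more seriously, filling in \emph{all} chords of the cycle produces triangles on triples $X,Y,Z$ (with $|X|<|Y|<|Z|$) in which the pair $\{Y,Z\}$ is itself a chord, i.e.\ $Y$ and $Z$ are \emph{not} adjacent in $\str{G}$. For such a pair, condition~\ref{A1} is not available; in fact non-adjacency of $Y,Z$ means precisely that $\triangle(\str{Y}_i,\str{Z}_{|Y|},\str{Z}_i)$ is undefined or fails to embed into $\str{K}$ for \emph{some} $i<|Y|$, and nothing prevents that bad index from being $|X|$. So the triangle $\triangle(\str{Y}_{|X|},\str{Z}_{|Y|},\str{Z}_{|X|})$ you create may genuinely fail to have a completion to $\str{K}$, and the hypothesis on $\str{K}$ then does not apply to your $\str{D}$; the ``bookkeeping'' you defer is exactly the point where the argument breaks, not a routine check. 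The paper avoids this by adding only the chords emanating from a vertex $U^0$ of minimal length: the resulting fan triangulation has the property that every new triangle $U^0,U^k,U^{k+1}$ contains an actual edge $U^kU^{k+1}$ of the cycle (hence of $\str{G}$), so \ref{A1} for that adjacent pair, applied with $i=|U^0|$, certifies that the triangle is defined and embeds into $\str{K}$. To repair your proof you would have to restrict the filled-in chords in the same way (and then handle the equal-minimal-length vertices by the identification argument plus Lemma~\ref{lem:strong}), which is exactly the paper's proof.
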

\begin{proof}
Suppose for contradiction that there exists $\ell$ and a sequence $U^0$, $U^1$, \ldots, $U^{\ell-1}$ forming an induced cycle
$\str{C}$ in $\str{G}$ such that $\str{C}$ has no completion to $\str{K}$. 
Without loss of generality, we assume that $|U^0|\leq |U^k|$ for every $1\leq k< \ell$.
We create a structure $\str{D}$ from $\str{C}$ by adding precisely those tuples to the relations of~$\str{D}$ such that  
the mapping $0\mapsto \str{U}^0$, $1\mapsto \str{U}^k$ is an embedding from $\str{U}^k_{|U^0|}$ to $\str{D}$ for every $k$ satisfying $2\leq k< \ell$ and $|U^0|<|U^k|$.

For simplicity, consider first the case that we have
$|U^0|<|U^k|$ for every $1\leq k\leq \ell-1$.  In this case, we produced a triangulation of $\str{D}$: all induced
cycles are triangles containing the vertex $U^0$.
It follows from the construction of $\str{G}$ that, for
every $2\leq k\leq \ell$, the triangle induced by $\str{D}$ on $U^0$, $U^k$ and $U^{k+1}$
is isomorphic either to $\triangle(\str{U}^k_{|U^0|},\str{U}^{k+1}_{|U^k|}, \str{U}^{k+1}_{|U^{0}|})$ (if $|U^k|<|U^{k+1}|$) or to
$\triangle(\str{U}^{k+1}_{|U^0|},\str{U}^{k}_{|U^{k+1}|}, \str{U}^{k}_{|U^{0}|})$. 
By~\ref{A1} the triangle has an embedding to $\str{K}$, hence all induced cycles in
$\str{D}$ have a completion to $\str{K}$, which implies that
$\str{D}$ has a completion $c\colon \str{D}\to \str{K}$. We get completion $c:\str{C}\to \str{K}$, a contradiction.

It remains to consider the case that there are multiple vertices of $\str{D}$ of
length $|U^0|$.  We then set $M:=\{U^k: |U^k|=|U^0|\}$.
By the construction of $\str{G}$, the
vertices in $M$ are never neighbours. Moreover, for every $U,V\in M$, the structure induced on $\{U\}$ by $\str{C}$ is isomorphic to structure induced on $\{V\}$
by $\str{C}$, which, by \ref{A1}, is isomorphic to the structure induced on $\{0\}$ by $\str{W}_{|U^0|}$
for every $W\in C\setminus M$.
  Consequently,
it is possible to construct a structure $\str{E}$ from $\str{D}$ by identifying all vertices
in $M$ and to obtain a homomorphism-embedding $f\colon\str{D}\to\str{E}$.
Observe that the structure $\str{E}$ is triangulated and every triangle is known to
have a completion to $\str{K}$.  By Lemma~\ref{lem:strong},  $\str{D}$ also has a completion to $\str{K}$.
\end{proof}
The following result follows directly from the definition of substitution.
\begin{observation}
\label{obs:preserve2}
For every $W\in \Space{\emptyset}{\omega}{\omega}$
and all $U,V\in G$, the structure induced by $\str{G}$ on $\{U,V\}$ is isomorphic to the structure induced by $\str{G}$ on $\{W(U),W(V)\}$.
\end{observation}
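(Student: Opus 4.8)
The statement is obtained by unwinding the definition of $\str{G}$ (items \ref{itemG1}--\ref{itemG3}) together with the definition of substitution, and I would organise it in three steps. \emph{Step 1: behaviour of the substitution.} Since the alphabet of $W$ is empty, every letter of $W$ is a parameter; write $W_j=\lambda_{i(j)}$ and let $p_k$ be the index of the first occurrence of $\lambda_k$ in $W$, so that $0=p_0<p_1<p_2<\cdots$. Straight from the definition of substitution, for every $X\in G$ one has $|W(X)|=p_{|X|}$, the $j$-th letter of $W(X)$ equals $\str{X}_{i(j)}$ for $0\le j<p_{|X|}$, and (since $W_{p_k}=\lambda_k$) the $p_k$-th letter of $W(X)$ equals $\str{X}_k$ whenever $k<|X|$. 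Three consequences: (a) $W_0=\lambda_0$, so the first letter of $W(X)$ is $\str{X}_0$; (b) as $k\mapsto p_k$ is strictly increasing, $|U|<|V|\iff|W(U)|<|W(V)|$ and $|U|=|V|\iff|W(U)|=|W(V)|$; (c) $\{i(j):0\le j<p_{|X|}\}=\{0,\dots,|X|-1\}$, since $\lambda_{|X|}$ has not occurred before position $p_{|X|}$ while each $\lambda_i$ with $i<|X|$ has. As \ref{itemGA1} only constrains the structures the letters induce on $\{1\}$, (c) yields $W(U),W(V)\in G$, so the structure $\str{G}$ induces on $\{W(U),W(V)\}$ is well defined.

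\emph{Step 2: the binary part.} The case $|U|=|V|$ requires only Step 3: then $|W(U)|=|W(V)|$, so neither pair carries a binary relation (item \ref{itemG2} needs unequal lengths), while $W(U)=W(V)\iff U=V$ by (c); and $U=V$ is trivial. So assume $|U|<|V|$. By (b), $|W(U)|<|W(V)|$, hence by items \ref{itemG2}--\ref{itemG3} the structure $\str{G}$ induces on $\{W(U),W(V)\}$ is governed by whether \ref{A1} holds for $(W(U),W(V))$, and, if so, equals the $|W(U)|$-th letter of $W(V)$ with $W(U)\mapsto 0$, which by Step 1 is $\str{V}_{|U|}$. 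Moreover, for $0\le j<|W(U)|=p_{|U|}$ the $j$-th letters of $W(U)$ and $W(V)$ are $\str{U}_{i(j)}$ and $\str{V}_{i(j)}$, so \ref{A1} for $(W(U),W(V))$ asks precisely that $\triangle(\str{U}_{i(j)},\str{V}_{|U|},\str{V}_{i(j)})$ be defined and embed into $\str{K}$ for all $j<p_{|U|}$; by (c) the index $i(j)$ exhausts $\{0,\dots,|U|-1\}$, so this is exactly \ref{A1} for $(U,V)$. Thus \ref{A1} holds for one pair iff for the other, in which case both induced structures equal $\str{V}_{|U|}$, and otherwise neither pair carries a binary relation.

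\emph{Step 3: the unary part.} This is where I expect the only real care to be needed; one checks that the structure $\str{G}$ induces on a single vertex $X$ coincides with that on $W(X)$. By items \ref{itemG2}--\ref{itemG3}, any instance of \ref{itemG2} touching $X$ puts $X$ in coordinate $1$ of some $\str{X}_{|Y|}$ with $|Y|<|X|$ (and then $\str{X}_{|Y|}$ induces the same structure on $\{1\}$ as $\str{X}_0$, by \ref{itemGA1}), or in coordinate $0$ of some $\str{Z}_{|X|}$ with $|X|<|Z|$ (and then $\str{Z}_{|X|}$ induces on $\{0\}$ the structure that $\str{X}_0$ induces on $\{1\}$, as otherwise the triangle $\triangle(\str{X}_0,\str{Z}_{|X|},\str{Z}_0)$ demanded by \ref{A1} would be undefined). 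Hence the structure $\str{G}$ induces on $\{X\}$ is either trivial or is the one $\str{X}_0$ induces on $\{1\}$, according to whether $X$ is incident to a relation of $\str{G}$. Combining (a) with the length/\ref{A1} correspondence of Step 2---applied to a witness $Y$ or $Z$ of $X$, and, conversely, recovering such a witness for $X$ from one for $W(X)$ by reading off its letters at positions $p_0,p_1,\dots$---one gets that $X$ is incident to a relation of $\str{G}$ iff $W(X)$ is, and therefore the two single-vertex structures agree. Together with Step 2 this yields the isomorphism, compatibly with $U\mapsto W(U)$ and $V\mapsto W(V)$. The only genuine obstacle is this bookkeeping around unary relations---especially the equivalence ``$X$ incident to a relation iff $W(X)$ incident to a relation'' in both directions; the rest is a direct reading-off of the substitution formula, exactly as the stated remark suggests.
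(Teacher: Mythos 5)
Your Steps 1 and 2 are correct and are exactly the kind of unwinding the paper has in mind (the paper offers no details at all, just the remark that the statement follows from the definition of substitution). The genuine problem is in Step 3, in the backward direction of your claimed equivalence ``$X$ is incident to a relation iff $W(X)$ is''. Suppose $W(X)$ receives its relations from a shorter witness $P$, i.e.\ $|P|<|W(X)|=p_{|X|}$ and \ref{A1} holds for the pair $(P,W(X))$. The triangles this gives you are $\triangle(\str{P}_j,\str{X}_{i(|P|)},\str{X}_{i(j)})$ for $j<|P|$, where $i(|P|)$ is the parameter index of $W$ at position $|P|$. Your recipe ``read off the letters of $P$ at positions $p_0,p_1,\dots$'' produces a word whose length is the number of first occurrences below $|P|$, but the middle letter of the triangles you need for a witness of $X$ is $\str{X}_{|Y|}$, and $i(|P|)$ need not equal that count because $W$ may repeat parameters. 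Concretely, for $W=\lambda_0\lambda_0\lambda_1\lambda_2\cdots$ and $|P|=1$ you only obtain the triangle $\triangle(\str{P}_0,\str{X}_0,\str{X}_0)$, from which no pair involving $X$ can be read off at all (the correct read-off length would be $i(|P|)=0$, i.e.\ the empty word, which is not a vertex). So as written the transfer fails exactly when $W_{|P|}$ is a repeated occurrence of a parameter, and in particular the case $i(|P|)=0$ is not covered; the correct read-off is the prefix of positions $p_0,\dots,p_{i(|P|)-1}$, which only helps when $i(|P|)\ge 1$.

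The cleanest repair avoids the incidence equivalence altogether: show that under the hypotheses every vertex $X$ of $\str{G}$ lies in at least one pair from item \ref{itemG2}. For $|X|\ge 2$, glue copies of $\str{X}_0$ and $\str{X}_1$ along their vertex $1$, leaving the two copies of vertex $0$ non-adjacent; this $3$-vertex structure has no induced cycle and all its irreducible substructures of size at most $2$ embed into $\str{K}$, so it completes to $\str{K}$, and the resulting triangle yields a one-letter word $Y$ with $(Y,X)$ satisfying \ref{A1}. For $|X|=1$, extend an embedding of $\str{X}_0$ into $\str{K}$ by any third vertex (here you use that $\str{K}$ is infinite) to get a two-letter word $Z$ with $(X,Z)$ satisfying \ref{A1}. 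Consequently the structure induced on any single vertex $X$ is always the structure its letters induce on $\{1\}$ (your consistency computation shows all demands agree), and since the letters of $W(X)$ are among the letters of $X$ (in particular $\str{W(X)}_0=\str{X}_0$), the single-vertex structures of $X$ and $W(X)$ coincide automatically. Note that this fix uses the completion hypothesis on $\str{K}$, not merely the definition of substitution; your push-forward direction (via Step 2) remains correct and is all that is needed once the above is in place.
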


Without loss of generality we assume that $K=\omega\setminus \{0\}$. Let $\str{K}'$
be the structure $\str{K}$ extended by the vertex $0$ such that there exists an embedding
$\str{K}'\to\str{K}$. Such a structure $\str{K}'$ exists, because duplicating the vertex 1 does
not introduce new induced cycles.
 We define the mapping $\varphi\colon\omega\setminus\{0\}\to G$ by setting
$\varphi(i)=U$, where $U$ is a word of length $i$ defined
by setting, for every $0\leq j<i$, $\str{U}_j$ as the unique structure in $\Alphabet$ such that $0\mapsto j$, $1\mapsto i$ is an embedding $\str{U}_j\to \str{K}'$.
It is easy to check that $\varphi$ is an embedding $\varphi\colon \str{K}\to \str{G}$.
We prove Theorem~\ref{thm:cycles} in the following form.

\begin{theorem}
\label{thm:cycles2}
For every finite $k\geq 1$ and every finite colouring of subsets of $G$ with $k$ elements, there exists
$f\in \binom{{\str{G}}}{\str{G}}$ such that the colour of every $k$-element subset $S$ of $f(\str{G})$ depends only on
$\tau(S)=\tau(f^{-1}[S])$. 
\end{theorem}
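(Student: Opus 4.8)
The plan is to transfer the Carlson--Simpson theorem (Theorem~\ref{thm:multCS}) from parameter spaces to the structure $\str{G}$ via the envelope machinery of Definition~\ref{def:envelope} and Proposition~\ref{prop:subspace}. First I would fix a finite colouring $\chi$ of the $k$-element subsets of $G$. Given such a subset $S=\{U^0,\dots,U^{k-1}\}\subseteq G$, each $U^i$ is a finite word over the finite alphabet $\Alphabet$, so $S$ is a finite subset of $\Alphabet^*$; by Proposition~\ref{prop:subspace} it has a minimal envelope $W_S\in\VSpace{\emptyset}{\omega}{d}$ with $d\le T(|\Alphabet|,k)$ and a well-defined embedding type $\tau(S)=\tau_{W_S}(S)$, of which there are only finitely many. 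The idea is to induce a colouring on a parameter space: after padding all envelopes to a common bounded dimension (or handling each dimension $d\le T$ separately and intersecting the resulting subspaces finitely many times), one colours a tuple in $\VSpace{\emptyset}{\omega}{d}$ by reading off, from the word $W\in\Space{\emptyset}{\omega}{\omega}$ currently under consideration, the pair $(\tau(S),\chi(S))$ where $S$ is the set of structure-words named by that tuple. Iterating Theorem~\ref{thm:multCS} over the finitely many dimensions and finitely many types yields a single $W\in\Space{\emptyset}{\omega}{\omega}$ such that on $W(\VSpace{\emptyset}{\omega}{d})$ the colour $\chi(S)$ is a function of $\tau(S)$ alone.

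Next I would convert this $W$ into the required embedding $f\in\binom{\str{G}}{\str{G}}$. Substitution by $W$ sends words over $\Alphabet$ to words over $\Alphabet$, and Observation~\ref{obs:preserve2} guarantees that for all $U,V\in G$ the structure induced by $\str{G}$ on $\{W(U),W(V)\}$ is isomorphic to that on $\{U,V\}$; since $\str G$'s relations are entirely determined by pairs (item~\ref{itemG3} of the definition, via condition~\ref{A1}), this means $W$ acts as an embedding $f\colon\str{G}\to\str{G}$ — one must also check that $W(U)$ still satisfies the defining condition~\ref{itemGA1}, which follows because substitution is applied uniformly to every coordinate, preserving the induced structure on $\{1\}$ across letters. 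Finally, for a $k$-element subset $S\subseteq f(\str G)$, writing $S=f(S')$ with $S'=f^{-1}[S]\subseteq G$, the point is that $W$-substitution commutes with taking envelopes: if $W'$ is a minimal envelope of $S'$ then $W(W')$ is an envelope of $S$ with the same embedding type, so $\tau(S)=\tau(S')$, and $\chi(S)$ depends only on this common type by the previous paragraph.

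The main obstacle I expect is the bookkeeping in the first paragraph: the colouring of the parameter space is not literally a colouring of $\VSpace{\emptyset}{\omega}{d}$ for a fixed $d$ but something that depends on where the $k$-tuple of words ``lives'' inside the ambient $\omega$-parameter word, and different $k$-subsets of $G$ have envelopes of different dimensions $d\le T$ and different embedding types. The clean way to handle this, following \cite[Section~4.1]{Hubicka2020CS}, is to enumerate the finitely many pairs $(d,\tau)$ with $d\le T$ and $\tau$ a possible embedding type of a $k$-set, and to apply Theorem~\ref{thm:multCS} once for each such pair to the induced colouring ``colour of the unique $k$-set with envelope of dimension $d$ and type $\tau$ obtained by substituting into the current tuple,'' passing each time to the monochromatic subspace and composing the resulting $\omega$-parameter words. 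Because there are only finitely many pairs, this terminates, and the final $W$ works simultaneously for all of them. A secondary subtlety is verifying that the composition of $\omega$-parameter words is again an $\omega$-parameter word and that the associated maps on $G$ compose correctly, but this is routine from the definition of substitution; and one must confirm that condition~\ref{A1} is stable under $W$-substitution, which again reduces to Observation~\ref{obs:preserve2} applied pairwise.
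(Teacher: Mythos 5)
Your proposal is correct and follows essentially the same route as the paper's (sketched) proof: use Proposition~\ref{prop:subspace} to reduce to finitely many embedding types, apply Theorem~\ref{thm:multCS} once per type (composing the resulting $\omega$-parameter words), and use Observation~\ref{obs:preserve2} to see that substitution by the final word gives an embedding of $\str{G}$ into itself, with the remaining details (well-definedness of the induced colourings and invariance of $\tau$ under substitution) deferred exactly as the paper defers them to~\cite[proof of Theorem 4.4]{Hubicka2020CS}.
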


By Proposition~\ref{prop:subspace}, we obtain the desired finite upper bound on the number of colours.
By the completion $c\colon\str{G}\to\str{K}$ given by Lemma~\ref{lem:completion}, the colouring of substructures of $\str{K}$ yields a colouring
of irreducible substructures of $\str{G}$. Embedding $f\in \binom{{\str{G}}}{\str{G}}$ can be restricted $f'\in \binom{{\str{G}}}{\str{K}}$ and gives $c\circ f'\in \binom{{\str{K}}}{\str{K}}$
and thus Theorem~\ref{thm:cycles2} indeed implies Theorem~\ref{thm:cycles}.

\begin{proof}[Sketch]
Fix $k$ and a finite colouring $\chi$ of the
subsets of $G$ of size $k$.   Proposition~\ref{prop:subspace} bounds number of embedding types of subsets of $G$ of size $k$. Apply Theorem~\ref{thm:multCS} for each embedding type. By Observation~\ref{obs:preserve2}, we obtain the desired embedding; see~\cite[proof of Theorem 4.4]{Hubicka2020CS} for details.
\end{proof}

\paragraph{Acknowledgement}
D.~Ch., J.~H., M. K. and J.~N. are supported by the project 21-10775S of  the  Czech  Science Foundation (GA\v CR). This is part of a project that has received funding from the European Research Council (ERC) under the EU Horizon 2020 research and innovation programme (grant agreement No 810115).
M. B. and J. H. were supported by the Center for Foundations of Modern Computer Science (Charles University project UNCE/SCI/004). M. K. was supported by the Charles University Grant Agency (GA UK), project 378119. L.~V. is supported by Beatriu de Pin\'os BP2018, funded by the AGAUR (Government of Catalonia) and by the Horizon 2020 programme No 801370.


\begin{thebibliography}{10}
\providecommand{\url}[1]{{#1}}
\providecommand{\urlprefix}{URL }
\expandafter\ifx\csname urlstyle\endcsname\relax
  \providecommand{\doi}[1]{DOI~\discretionary{}{}{}#1}\else
  \providecommand{\doi}{DOI~\discretionary{}{}{}\begingroup
  \urlstyle{rm}\Url}\fi

\bibitem{Aranda2017}
Aranda, A., Bradley-Williams, D., Hubi{\v c}ka, J., Karamanlis, M.,
  Kompatscher, M., Kone{\v c}n{\'y}, M., Pawliuk, M.: Ramsey expansions of
  metrically homogeneous graphs (2017).
\newblock Submitted, arXiv:1707.02612

\bibitem{Balko2021}
Balko, M., Chodounsk{\' y}, D., Hubi{\v c}ka, J., Kone{\v c}n{\' y}, M., Ne{\v
  s}et{\v r}il, J., Vena, L.: Infinitary {G}raham--{R}othschild theorem for
  words of higher order (2021).
\newblock (in preparation)

\bibitem{Sam}
Braunfeld, S.: Ramsey expansions of {$\Lambda$}-ultrametric spaces (2017).
\newblock ArXiv:1710.01193

\bibitem{carlson1984}
Carlson, T.J., Simpson, S.G.: A dual form of {R}amsey's theorem.
\newblock Advances in Mathematics \textbf{53}(3), 265--290 (1984)

\bibitem{Cherlin2013}
Cherlin, G.: Homogeneous ordered graphs and metrically homogeneous graphs
  (December 2017).
\newblock Submitted

\bibitem{dobrinen2017universal}
Dobrinen, N.: The {R}amsey theory of the universal homogeneous triangle-free
  graph.
\newblock Journal of Mathematical Logic p. 2050012 (2020)

\bibitem{Hubicka2020CS}
Hubi{\v{c}}ka, J.: Big {R}amsey degrees using parameter spaces.
\newblock arXiv:2009.00967  (2020)

\bibitem{Hubicka2016}
Hubi{\v{c}}ka, J., Ne\v{s}et\v{r}il, J.: All those {R}amsey classes ({R}amsey
  classes with closures and forbidden homomorphisms).
\newblock Advances in Mathematics \textbf{356C}, 106,791 (2019)

\bibitem{NVT2009}
Nguyen Van~Th{\'e}, L.: Ramsey degrees of finite ultrametric spaces,
  ultrametric {U}rysohn spaces and dynamics of their isometry groups.
\newblock European Journal of Combinatorics \textbf{30}(4), 934--945 (2009)

\bibitem{The2010}
Nguyen Van~Th{\'e}, L.: Structural {R}amsey Theory of Metric Spaces and
  Topological Dynamics of Isometry Groups.
\newblock Memoirs of the American Mathematical Society. American Mathematical
  Society (2010)

\bibitem{todorcevic2010introduction}
Todorcevic, S.: Introduction to {R}amsey spaces, vol. 174.
\newblock Princeton University Press (2010)

\bibitem{zucker2020}
Zucker, A.: A note on big {R}amsey degrees.
\newblock arXiv:2004.13162  (2020)

\end{thebibliography}
\end{document}